\def\ps@pprintTitle{%
 \let\@oddhead\@empty
 \let\@evenhead\@empty
 \def\@oddfoot{}%
 \let\@evenfoot\@oddfoot}
\newcommand{\removelatexerror}{\let\@latex@error\@gobble}
\newtheorem{theorem}    {Theorem}
\newtheorem{definition} {Definition}
\newtheorem{assm} {Assumption}
\newtheorem{remark}     {\noindent Remark}
\newcommand{\nc}{\newcommand}
\nc{\R}{{\mathbb R}}
\nc{\C}{{\mathbb C}}
\nc{\Z}{{\mathbb Z}}
\nc{\N}{{\mathbb N}}
\nc{\s}{\bar{\bf s}}
\nc{\I}{{\cal I}^*}
\begin{document}

\begin{frontmatter}

\title{
\Large 
A
Convolution Bound Implies Tolerance to\\ Time-variations and Unmodelled Dynamics
\tnoteref{tt1}}
\tnotetext[tt1]{
{Funding for this research was provided by the
Natural Sciences and Engineering Research Council of Canada (NSERC).}
}

\author{Mohamad T. Shahab}
\ead{
m4shahab@uwaterloo.ca
}
\author{Daniel E. Miller\corref{ath1}}
\ead{
miller@uwaterloo.ca
}
\cortext[ath1]{
Corresponding author.
}

\address{Dept. of Electrical and Computer Engineering,\\
        University of Waterloo\\
        Waterloo, ON, Canada N2L 3G1\\
        }



\begin{abstract}
Recently it has been shown, in several settings, how to carry out
adaptive control for an LTI plant so that a convolution bound 
holds on
the closed-loop behavior;
this, in turn, has been leveraged to prove robustness of the closed-loop system
to time-varying parameters and unmodelled dynamics.
The goal of this paper is to show that 
the same is true for a large class of
finite-dimensional, nonlinear
plant and controller combinations.
\end{abstract}



\begin{keyword}
Adaptive Control \sep
Robustness \sep 
Convolution Bounds \sep
Time-variations \sep 
Unmodelled Dynamics



\end{keyword}

\end{frontmatter}







\section{Introduction}

In control system design, a common requirement is that the closed-loop
system not only be stable, but also
 be robust, in the sense that it tolerates, at the very least, small
time-variations in the plant parameters and a small amount of unmodelled dynamics.
Of course, if the plant and controller are both linear and time-invariant,
then such robustness follows from closed-loop stability---see \cite{zames}, \cite{desoer}.
On the other hand, if either the plant or controller is nonlinear,
this is often not the case and/or it is not easy to prove.

Recently it has been proven, in both the pole placement and first order
one-step-ahead settings, that if discrete-time
adaptive control is carried out in just the right way, then a (stable) convolution
bound can be obtained on the closed-loop behavior---see \cite{scl17} and \cite{pole18}; hence, the closed-loop system acts `linear-like', and
the convolution bound can be leveraged in a modular fashion\footnote{
  It is {\em modular} in the sense that we are able to leverage the results
  for the ideal case without reopening its proof;
  robustness can be proven directly from the convolution bound.
}
to prove that tolerance to
small time-variations and a small amount of unmodelled dynamics follows.
Of course, there
the controller is nonlinear and the nominal
plant is single-input, single-output, and LTI.
The goal of this paper is to generalize this result to
a larger class of multi-input multi-output plants and controllers.

To this end, here we consider a class of
finite-dimensional, nonlinear
plant and controller combinations;
if a convolution bound holds, then we prove that
tolerance to small time-variations in the plant parameters and a small amount of unmodelled dynamics follows.
An immediate application of this result is to prove robustness of our recently designed multi-estimator switching adaptive controllers presented in \cite{pole18} and \cite{cdc18}.
This result should also prove useful in extending our work on the adaptive control of LTI 
plants \cite{ccta17}, \cite{pole18}, \cite{cdc18}, \cite{acc19}
to that of nonlinear plants,
allowing us to focus on the ideal plant model in our analysis,
knowing that robustness will come for free.
Last of all, this result has the potential for use in other non-adaptive
(but nonlinear) contexts.

We denote ${\Z}$, ${\Z}^+$ and ${\N}$ as the sets of integers, non-negative integers and natural numbers, respectively. We will denote the Euclidean-norm of a vector and the induced norm of a matrix by the subscript-less default notation $\|\cdot\|$.
We let ${\mathbb S}(\R^{p\times q})$ denote the set of $\R^{p\times q} $-valued sequences.
We also let $\boldsymbol\ell_\infty(\R^{p\times q})$ denote the set of $\R^{p\times q} $-valued {\it bounded} sequences. If $\Omega\subset\R^{p\times q} $ is a bounded set, we define $\|\Omega\|:=\sup_{x\in\Omega}\|x\|$. 

Throughout this paper, we say that a function $\Gamma : \R^p \rightarrow \R^{q} $ has a {\em bounded gain} if  
there exists a
 $\nu>0$
 such that
 for all $x\in\R^{p}$, we have
 $\left\|\Gamma(x) \right\|
  \leq
  \nu
  \|x\|$;
the smallest such $\nu $ is the gain, and is denoted by $\|\Gamma\| $.

For a closed and convex set $\Omega\subset\R^p$, the function $\text{Proj}_\Omega \{\cdot\} : \R^p\rightarrow\Omega $
denotes the projection onto $\Omega$; it is well known that the function $\text{Proj}_\Omega $ is well defined.

\section{The Setup}

Here the nominal plant is multi-input multi-output
with finite memory and an additive disturbance,
such that the uncertain plant parameter enters linearly. 
To this end, with an output $y(t)\in\R^r$, an input $u(t)\in\R^m$, a disturbance $w(t)\in\R^r $,
 a modeling parameter of 
 $$\theta^*\in{\cal S}\subset\R^{p \times r},$$
and a vector of input-output data of the form
\[
  \phi(t)
  =
  \begin{bmatrix}
  y(t) \\ y(t-1) \\ \vdots \\ y(t-n_y+1) 
  \\
  u(t) \\ u(t-1) \\ \vdots \\ u(t-n_u+1)
  \end{bmatrix}
  \in \R^{n_y\cdot r+n_u\cdot m},
\]
we consider the plant
\begin{eqnarray}
  y(t+1)
  &=&
  {\theta^*}^\top
  f\bigl(\phi(t)\bigr) 
  + w(t),
  \quad
  \phi(t_0)=\phi_0;
   \label{plant1}
\end{eqnarray}
we assume that $f:\R^{n_y\cdot r+n_u\cdot m}\rightarrow\R^{p}$ 
has a \textbf{bounded gain} and that
 \textbf{${\cal S}$ is a bounded set};
both requirements are reasonable given that we will require 
uniform bounds in our analysis. 
We represent this system by the pair $\bigl(f,{\cal S}\bigr)$.

Here we consider a large class of controllers which
subsumes LTI ones as well as a large class of adaptive ones.
To this end, we consider a controller with its state partitioned into two parts:
\begin{itemize}
  \item 
  $z_1(t) \in \R^{l_1} $ and
  \item
  $z_2(t) \in\R^{l_2} $,
  \end{itemize}
an exogenous signal $r(t)\in\R^{q}$ (typically a reference signal), 
together with equations of the form
\begin{subequations}
\label{control}
\begin{flalign}
z_1(t+1) 
&=
g_1\left(
z_1(t), z_2(t), \phi(t), y(t+1),
r(t), t,t_0
\right),
\nonumber
\\
&\qquad\qquad\qquad\qquad\qquad\qquad\quad\;
z_1(t_0)=z_{1_0}
\label{control_3}
\\
z_2(t+1) 
&=
g_2\left(
z_1(t),z_2(t),\phi(t),y(t+1),
r(t), t,t_0
\right),
\nonumber
\\
&\qquad\qquad\qquad\qquad\qquad\qquad\quad\;
z_2(t_0)=z_{2_0} 
\label{control_1}
\\
u(t)
&=
h\left(
z_1(t),z_2(t),\phi(t),r(t)
\right). 
\label{control_2}
\end{flalign}
\end{subequations}
Here we assume that
\[
  g_2:
  \R^{l_1} \times {\cal X} \times 
  \R^{n_y\cdot r+n_u\cdot m}
  \times \R^r
  \times \R^q
  \times \Z \times \Z
  \longrightarrow
  {\cal X},
\]
i.e. if $z_2$ is initialized in ${\cal X} $, then it remains
in ${\cal X} $ throughout.

\begin{remark}
This class subsumes finite-dimensional LTI controllers: simply set $l_2=0$
so that the sub-state $z_2$ disappears, and make the functions $g_1 $ and $ h $ to be linear.
\end{remark}

\begin{remark}
This class subsumes many adaptive controllers:
simply set $l_1=0$ and let $z_2$ be the state of a parameter estimator constrained to the set ${\cal X} $.
\end{remark}

We now provide a definition of the desired linear-like closed-loop property:

\begin{mdframed}
\begin{definition}\label{def_conv}
We say that \eqref{control} {\bf provides a convolution bound for}
$\bigl(f,{\cal S}\bigr)$
with gain $c\geq1 $ and decay rate $\lambda\in(0,1) $ if, for every
$\theta^*\in{\cal S}$, $t_0\in\Z$, $\phi_0\in\R^{n_y\cdot r+n_u\cdot m}$, $z_{1_0}\in\R^{l_1} $, $z_{2_0}\in{\cal X}\subset\R^{l_2} $,  $w\in{\mathbb S}(\R^r) $ and $r\in{\mathbb S}(\R^q) $,
when \eqref{control} is applied to \eqref{plant1}, the following holds:
\begin{flalign}\label{conv_v1}
   &\left\|
   \begin{bmatrix}
   \phi(t)
   \\
   z_1(t)
   \end{bmatrix}
   \right\| 
  \leq
  c
  \lambda^{t-\tau}
 \left\|
   \begin{bmatrix}
   \phi(\tau)
   \\
   z_1(\tau)
   \end{bmatrix}
   \right\| 
  +
  \nonumber
  \\
  &\quad
  \sum_{j=\tau}^{t-1}
  c \lambda^{t-j-1}
   (\|r(j)\|+\|w(j)\|)
+c\|r(t)\|,
\nonumber
\\
  &\qquad\qquad\qquad
  t\geq\tau\geq t_0.
\end{flalign}
\end{definition}
\end{mdframed}

\begin{remark}
The reason why we do not focus on the exponential stability
aspect of \eqref{conv_v1} is that the 
$c
  \lambda^{t-\tau}
 \left\|
   \begin{bmatrix}
   \phi(\tau)
   \\
   z_1(\tau)
   \end{bmatrix}
   \right\| $
term can be viewed, in essence, as the effect of the past inputs on
the future, in much the same way as the `zero-input-response'
can be viewed in the analysis of LTI systems.
More specifically, the 
$c
  \lambda^{t-\tau}
 \left\|
   \begin{bmatrix}
   \phi(\tau)
   \\
   z_1(\tau)
   \end{bmatrix}
   \right\| $
term can be viewed as having arisen from a convolution
of the past inputs (before time $\tau$) with $c\lambda^t$, so
this term can be viewed as a convolution sum
in its own right.
\end{remark}

\section{Tolerance to Time-Variation}

We now consider plants with a possibly time-varying parameter vector $\theta^*(t)$ instead of a static $\theta^*$:
\begin{flalign}
\label{plantTV}
  y(t+1)
  &=
  \theta^*(t)^\top f\bigl(\phi(t)\bigr)
  +
  w (t),
  \quad
  \phi(t_0)=\phi_0.
\end{flalign} 
With $c_0\geq0$ and $\epsilon>0$,
let $s({\cal S},c_0,\epsilon)$ denote the subset of $\boldsymbol\ell_\infty(\R^{p\times r})$ whose elements $\theta^* $ satisfy: 
\begin{itemize}
\item
$\theta^*(t)\in{\cal S} $ for every $t\in\Z $, 
\item and
\[
  \sum_{t=t_1}^{t_2-1} 
  \Vert \theta^*(t+1)-\theta^*(t) \Vert
  \leq
  c_0 + \epsilon(t_2-t_1),\quad t_2>t_1,\; t_1\in\Z.
\]
\end{itemize}
The above time-variation model encompasses both slow variations and/or occasional jumps;
this class is well-known in the adaptive control literature, e.g. see \cite{kreiss}.
We can extend Definition \ref{def_conv} in a natural way to handle time-variations.

\begin{mdframed}
\begin{definition}\label{def_conv2}
We say that \eqref{control} {\bf provides a convolution bound for}
$\bigl(f,s({\cal S},c_0,\epsilon)\bigr)$ with gain $c\geq1 $ and decay rate $\lambda\in(0,1) $ if, for every
$\theta^*\in s({\cal S},c_0,\epsilon)$, $t_0\in\Z$, $\phi_0\in\R^{n_y\cdot r+n_u\cdot m}$, $z_{1_0}\in\R^{l_1} $, $z_{2_0}\in{\cal X}\subset\R^{l_2} $,  $w\in{\mathbb S}(\R^r) $ and $r\in{\mathbb S}(\R^q) $,
when \eqref{control} is applied to \eqref{plantTV}, the following holds:
\begin{flalign}\label{conv_v1b}
   &\left\|
   \begin{bmatrix}
   \phi(t)
   \\
   z_1(t)
   \end{bmatrix}
   \right\| 
  \leq
  c
  \lambda^{t-\tau}
 \left\|
   \begin{bmatrix}
   \phi(\tau)
   \\
   z_1(\tau)
   \end{bmatrix}
   \right\| 
  +
  \nonumber
  \\
  &\quad
  \sum_{j=\tau}^{t-1}
  c \lambda^{t-j-1}
   (\|r(j)\|+\|w(j)\|)
+c\|r(t)\|,
\nonumber
\\
  &\qquad\qquad\qquad
  t\geq\tau\geq t_0.
\end{flalign}
\end{definition}
\end{mdframed}

We now will show that if a controller \eqref{control} provides
convolution bounds for the plant \eqref{plant1}, then the same will
be true for the time-varying plant \eqref{plantTV},
as long as $\epsilon $ is small enough.
We consider two cases: one where there is a desired decay rate, and one where there is not.

\begin{mdframed}
\begin{theorem}\label{lemma_tv}
Suppose that the controller \eqref{control} provides 
a convolution bound for \eqref{plant1}
with gain $c\geq1 $ and decay rate $\lambda\in(0,1)$.
Then for every $\lambda_1\in(\lambda,1)$ and $c_0>0$, there exist a $c_1\geq c $ and $\epsilon>0 $ so that 
\eqref{control} provides a convolution bound for 
$\bigl(f,s({\cal S},c_0,\epsilon)\bigr)$
with gain $c_1 $ and decay rate $\lambda_1 $.
\end{theorem}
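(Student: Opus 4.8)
The plan is to treat the time-varying plant \eqref{plantTV} as a perturbation of the time-invariant plant \eqref{plant1}. Fix any admissible $\theta^* \in s({\cal S},c_0,\epsilon)$ and any initial data, disturbance $w$ and reference $r$, and run \eqref{control} on \eqref{plantTV}. The key observation is that, by rewriting the dynamics, the time-varying system behaves like the fixed-parameter system driven by an \emph{augmented disturbance}. Specifically, pick any reference time $\sigma \geq t_0$ and freeze the parameter at its value $\bar\theta := \theta^*(\sigma)$; then
\[
  y(t+1) = \bar\theta^\top f(\phi(t)) + \tilde w(t), \qquad \tilde w(t) := w(t) + \bigl(\theta^*(t)-\bar\theta\bigr)^\top f(\phi(t)).
\]
Since $f$ has bounded gain, $\|\tilde w(t)\| \le \|w(t)\| + \|\theta^*(t)-\bar\theta\|\,\|f\|\,\|\phi(t)\|$, and the parameter increment sum bound lets us control $\|\theta^*(t)-\bar\theta\|$ on an interval in terms of $c_0 + \epsilon(t-\sigma)$. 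This is exactly the device used in \cite{scl17,pole18}, and it is the conceptual core of the argument.

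Next I would apply the hypothesized convolution bound \eqref{conv_v1} for $(f,{\cal S})$ — which is valid for the fixed plant with parameter $\bar\theta \in {\cal S}$ — to this reformulation, substituting $\tilde w$ for $w$. This yields, for $t \ge \tau \ge \sigma$, a bound on $\|[\phi(t)^\top\ z_1(t)^\top]^\top\|$ of the form $c\lambda^{t-\tau}\|[\phi(\tau)^\top\ z_1(\tau)^\top]^\top\| + \sum_{j=\tau}^{t-1} c\lambda^{t-j-1}(\|r(j)\|+\|w(j)\|) + c\|r(t)\|$ plus the extra term $\sum_{j=\tau}^{t-1} c\lambda^{t-j-1}\|\theta^*(j)-\bar\theta\|\,\|f\|\,\|\phi(j)\|$. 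The standard way to close the loop is to restrict attention to a window $[\sigma,\sigma+T]$ of fixed length $T$ (chosen so that $c\lambda^{T}$ is suitably small, e.g. $c\lambda^T < \tfrac12$), keep $\bar\theta = \theta^*(\sigma)$ over that window, and observe that on this window $\|\theta^*(j)-\bar\theta\| \le c_0 + \epsilon T$; if $\epsilon$ is small this is dominated by, say, a constant plus a small multiple of $T$. Taking $\tau = \sigma$ and a supremum over the window, the troublesome term is absorbed into the left-hand side provided $\epsilon \cdot (\text{window length}) \cdot \|f\| \cdot (\text{geometric sum}) < 1$ — this is where the smallness of $\epsilon$ is used, and it is the main obstacle: one must choose $T$ and $\epsilon$ together so that both the decay factor $c\lambda^T$ and the perturbation gain $\epsilon T \|f\| \cdot \tfrac{c}{1-\lambda}$ are small enough.

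Once the window estimate is established, the final step is a routine patching argument: chain the per-window bounds across consecutive windows $[\sigma, \sigma+T], [\sigma+T, \sigma+2T], \dots$ to produce a genuine exponential bound $c_1 \lambda_1^{t-\tau}$ with the improved rate $\lambda_1 \in (\lambda,1)$. The rate $\lambda_1$ rather than $\lambda$ appears because the per-window contraction gives some rate $\mu < 1$ with $\mu^{1/T} < 1$ arbitrarily close to $1$ as $T$ grows, but since we need $T$ large we only claim a rate slightly worse than the geometric-mean rate — hence "for every $\lambda_1 \in (\lambda,1)$" rather than $\lambda_1 = \lambda$. The constant $c_1 \ge c$ accumulates the within-window overshoot and the handling of the $c_0$ offset (which contributes a fixed, $\epsilon$-independent bump to the gain). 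The reference-signal and disturbance sums carry through the patching essentially unchanged because they already have the convolution structure, and the endpoint term $c\|r(t)\|$ is handled by noting $t$ always lies in some window.
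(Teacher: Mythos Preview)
Your overall strategy---freeze the parameter on a window, regard the time-variation as an additive disturbance, apply the convolution hypothesis, then patch windows---is exactly the one the paper uses. But there is a genuine gap in how you propose to handle the constant offset $c_0$.

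On a window $[\sigma,\sigma+T]$ you correctly bound $\|\theta^*(j)-\bar\theta\|\le c_0+\epsilon T$. The extra term in the convolution bound is then at most
\[
  (c_0+\epsilon T)\,\|f\|\,\frac{c}{1-\lambda}\,\sup_{j\in[\sigma,\sigma+T]}\|\phi(j)\|,
\]
and to absorb this into the left-hand side via a supremum you need $(c_0+\epsilon T)\|f\|\,c/(1-\lambda)<1$. You state the condition as only ``$\epsilon\cdot T\cdot\|f\|\cdot(\text{geometric sum})<1$'' and then say the $c_0$ part merely ``contributes a fixed, $\epsilon$-independent bump to the gain.'' That is not right: the $c_0$ contribution multiplies $\sup\|\phi\|$, not the initial condition or the exogenous inputs, so it is a feedback term, not a gain bump. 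Since $c_0>0$ is \emph{arbitrary} in the theorem, you cannot assume $c_0\|f\|c/(1-\lambda)<1$, and for large $c_0$ your absorption step fails outright.

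The paper resolves this with an additional idea your proposal is missing. It works with sub-intervals of length $m$ and groups $m$ of them into a meta-interval of length $m^2$; with $\epsilon=c_0/m^2$ the \emph{total} variation over the whole meta-interval is at most $2c_0$. This total budget can force at most a bounded number $N_1=O(c_0)$ of sub-intervals into a ``bad'' case (where $\|\tilde n_i\|$ is not small relative to $\|\phi\|$), while all remaining sub-intervals fall into a ``good'' case in which the perturbation is absorbed and one gets decay at rate $(\lambda+\lambda_1)/2$. Choosing $m$ large enough (depending on $c_0$) makes the decay over the many good sub-intervals dominate the bounded growth over the few bad ones, yielding net contraction at rate $\lambda_1$ over each meta-interval; then your patching step finishes the proof. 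The crucial point is that the $c_0$ budget is charged once over $m$ sub-intervals, not once per window as in your outline.
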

\end{mdframed}

\begin{remark}
This proof is based, in part, on the proof of Theorem 2 of \cite{pole18}, which deals with a much simpler setup.
\end{remark}

\begin{proof}[Proof of Theorem 1]
Suppose the controller \eqref{control} provides a convolution bound 
for \eqref{plant1} with gain $c\geq1 $ and a decay rate of $\lambda  $.
Fix $\lambda_1\in(\lambda,1) $ and $c_0>0 $; 
let
 $t_0\in\Z$, $\phi_0\in\R^{n_y\cdot r+n_u\cdot m}$, $z_{1_0}\in\R^{l_1} $, $z_{2_0}\in{\cal X} $, $w\in{\mathbb S}(\R^r) $ and $r\in{\mathbb S}(\R^q)$ be arbitrary. 

Now fix $m\in\N$ to be any number satisfying
\[
  m
  \geq
  \frac{\ln(c) 
  + 
  \frac{4 c_0 c\|f\|}{\lambda_1 - \lambda} 
  \left[\ln{\left(1+2c\|f\|\|{\cal S}\|\right)} 
  + \ln(2) 
  - \ln(\lambda + \lambda_1) \right]}
  {
  \ln(2\lambda_1) - \ln(\lambda+\lambda_1)
  },
\]
(the rationale for this choice will be more clear shortly),
and set
\[
\epsilon = \frac{c_0}{m^2};
\]
let $\theta^*\in s({\cal S},c_0,\epsilon)$ be arbitrary
and apply the controller \eqref{control} to the time-varying plant \eqref{plantTV}.
To proceed, we analyze the closed-loop system behavior on intervals of length $m$, which we further analyze in groups of $m^2 $.

To proceed, let $\bar t\geq t_0$ be arbitrary. Define a sequence $\{\bar t_i\}$ by
\[
  \bar t_i=\bar t+im, \qquad i\in\Z^+.
\]
We can rewrite the time-varying plant as
\begin{flalign*}
  y(t+1)
  &=
  \theta(\bar t_i)^\top f(\phi(t))
  + w(t) 
  + \underbrace{\left[
    \theta(t)-\theta(\bar t_i) 
    \right]^\top f(\phi(t)) 
    }_{=:\tilde n_i(t)},
    \\ 
    &\qquad
  t\in[\bar t_i,\bar t_{i+1}).
\end{flalign*}
On the interval $[\bar t_i,\bar t_{i+1}]$, we can regard the plant 
as time-invariant, but with an extra disturbance; so by 
hypothesis,
\begin{flalign}
  &
   \left\|
   \begin{bmatrix}
   \phi(t)
   \\
   z_1(t)
   \end{bmatrix}
   \right\| 
  \leq
  c 
  \lambda^{t-\bar t_i}
 \left\|
   \begin{bmatrix}
   \phi(\bar t_i)
   \\
   z_1(\bar t_i)
   \end{bmatrix}
   \right\| +
   \nonumber
   \\
  &\quad
  \sum_{j=\bar t_i}^{t-1}
  c \lambda^{t-j-1}
   (\|r(j)\|+\|w(j)\|+\|\tilde n_i(j)\|)
   +c\|r(t)\|,
   \nonumber
   \\
  &\qquad\qquad\qquad
  t\in[\bar t_i,\bar t_{i+1}],\; i\in\Z^+.
\end{flalign}
To analyze this difference inequality,
we first construct an associated difference equation:
\[
  \psi(t+1)=\lambda \psi(t) + \|r(t)\| + \|w(t)\| 
  + \|\tilde n_i(t)\|,\qquad 
  t\in[\bar t_i,\bar t_{i+1}),
\]
with an initial condition of
   \[
  \psi(\bar t_i)=
  \left\|
   \begin{bmatrix}
   \phi(\bar t_i)
   \\
   z_1(\bar t_i)
   \end{bmatrix}
   \right\|.
\]
Using the fact that $c\geq1 $, it is straightforward to prove that
\begin{equation}
\left\|
   \begin{bmatrix}
   \phi(t)
   \\
   z_1(t)
   \end{bmatrix}
   \right\| \leq c\psi(t)
   +c\|r(t)\| , \;\; 
   t\in[\bar t_i,\bar t_{i+1}].
   \label{start_eq}
\end{equation}
Now we analyze this equation for $i=0,1,\ldots,m-1$.\\

\noindent{\bf Case 1:} $\|\tilde n_i(t)\|\leq \frac{\lambda_1-\lambda}{2c}\|\phi(t) \|$ for all $t\in[\bar t_i, \bar t_{i+1})$.

Using the above bound \eqref{start_eq} and the fact that 
$\lambda_1-\lambda\in(0,1) $, we obtain
\begin{flalign}
\psi(t+1) 
&\leq
\lambda \psi(t) + \|r(t)\| + \|w(t)\| 
+ \|\tilde n_i(t)\| 
\nonumber
\\
&\leq
\lambda \psi(t) + \|r(t)\| + \|w(t)\| 
+ \tfrac{\lambda_1-\lambda}{2c}\|\phi(t) \|
\nonumber
\\
&\leq
\lambda \psi(t) + \|r(t)\| + \|w(t)\| 
+ \tfrac{\lambda_1-\lambda}{2}\left(\psi(t)+\|r(t)\|\right)
\nonumber
\\
&\leq
\tfrac{\lambda_1+\lambda}{2} \psi(t) 
+2 \|r(t)\| + \|w(t)\|, 
\quad t\in[\bar t_i, \bar t_{i+1}),
\end{flalign}
which means that
\begin{flalign}
|\psi(t)| 
&\leq
\left(
\tfrac{\lambda_1+\lambda}{2}
\right)^{t-\bar t_i}
|\psi(\bar t_i)|
+
\nonumber
\\
&
\sum_{j=\bar t_i}^{t-1}
\left(
\tfrac{\lambda_1+\lambda}{2}
\right)^{t-j-1}
\left(
2\|r(j) \| + \|w(j) \| 
\right), 
\nonumber \\
&
\qquad 
t=\bar t_i, \bar t_i+1,\ldots, \bar t_{i+1}.\label{tv1_bd1x}
\end{flalign}
This, in turn, implies that there exists $c_2\geq 2 c$ so that
\begin{flalign}
&\left\|
\begin{bmatrix}
\phi(\bar t_{i+1})
\\
z_1(\bar t_{i+1})
\end{bmatrix}
\right\| 
\leq
c\left(
\tfrac{\lambda_1+\lambda}{2}
\right)^{m}
\left\|
\begin{bmatrix}
\phi(\bar t_{i})
\\
z_1(\bar t_{i})
\end{bmatrix}
\right\| 
+
\nonumber
\\
&
\sum_{j=\bar t_i}^{\bar t_{i+1} -1}
c_2
\left(
\tfrac{\lambda_1+\lambda}{2}
\right)^{\bar t_{i+1}-j-1}
\left(
\|r(j) \| + \|w(j) \| 
\right)
+c_2\|r(\bar t_{i+1})\|
. \label{tv_bd1}
\end{flalign}

\noindent{\bf Case 2:} $\|\tilde n_i(t)\|> \frac{\lambda_1-\lambda}{2c}\|\phi(t) \|$ for some $t\in[\bar t_i, \bar t_{i+1})$.

Since $\theta^*(t)\in{\cal S}$ for $t\geq t_0$, we see that
\[
  \|\tilde n_i(t)\|
  \leq
  2\|{\cal S}\| \|f(\phi(t))\|
  \leq
  2 \|f\| \|{\cal S}\| \times \|\phi(t)\|, 
  \quad t\in[\bar t_i,\bar t_{i+1}).
\]
This means that 
\begin{flalign}
&
\psi(t+1) 
\leq
\lambda \psi(t) + \|r(t)\| + \|w(t)\| + \|\tilde n_i(t)\| 
\nonumber
\\
&
\leq
\lambda \psi(t) + \|r(t)\| + \|w(t)\| 
+ 2\|f\| \|{\cal S}\| \|\phi(t) \|
\nonumber
\\
&
\leq
\underbrace{\left(
1+2c\|f\|\|{\cal S}\|
\right)}_{=:\gamma_3}
 \psi(t) + 
 (1+2c\|f\|\|{\cal S}\|)\|r(t)\| +
 \|w(t)\|, 
   \nonumber
 \\
 & 
\qquad
 \qquad
  t\in[\bar t_i, \bar t_{i+1}),
\end{flalign}
which means that 
\begin{flalign}
|\psi(t)| 
&\leq
\gamma_3^{t-\bar t_i}
|\psi(\bar t_i)|
+
\sum_{j=\bar t_i}^{t-1}
\gamma_3^{t-j-1}
\left(
\gamma_3\|r(j) \| + \|w(j) \| 
\right), 
\nonumber \\
&
\qquad 
t=\bar t_i, \bar t_i+1,\ldots, \bar t_{i+1}.\label{tv_bd2x}
\end{flalign}
Setting $t=\bar t_{i+1} $ and using \eqref{start_eq} yields
\begin{flalign}
&\left\|
\begin{bmatrix}
\phi(\bar t_{i+1})
\\
z_1(\bar t_{i+1})
\end{bmatrix}
\right\| 
\leq
c\gamma_3^{m}
\left\|
\begin{bmatrix}
\phi(\bar t_{i})
\\
z_1(\bar t_{i})
\end{bmatrix}
\right\| 
+
\nonumber
\\
&
\sum_{j=\bar t_i}^{\bar t_{i+1} -1}
c
\gamma_3^{\bar t_{i+1}-j-1}
\left(
\gamma_3\|r(j) \| + \|w(j) \| 
\right)
+c\|r(\bar t_{i+1})\|
\nonumber \\
&\leq
c\gamma_3^{m}
\left\|
\begin{bmatrix}
\phi(\bar t_{i})
\\
z_1(\bar t_{i})
\end{bmatrix}
\right\| 
+
c
\gamma_3
\left(
\tfrac{2\gamma_3}{\lambda_1+\lambda}
\right)^m
\times
\nonumber
\\
&
\sum_{j=\bar t_i}^{\bar t_{i+1} -1}
\left(
\tfrac{\lambda_1+\lambda}{2}
\right)^{\bar t_{i+1}-j-1}
\left(
\|r(j) \| + \|w(j) \| 
\right)
+c\|r(\bar t_{i+1})\|
. \label{tv_bd2}
\end{flalign}
This completes Case 2.

{At this point we combine Case 1 and 2.}
We would like to analyze $m $ intervals of length $m $.
On the
interval $[\bar t,\bar t+ m^2]$,
 there are $m$ subintervals of length $m$; furthermore, because of the choice of $\epsilon$ we have that
\[
  \sum_{j=\bar t}^{\bar t+m^2-1}
  \|\theta(j+1)-\theta(j)\|
  \leq
  c_0 + \epsilon m^2
  \leq
  2c_0.
\]
It is easy to see that there are at most
 $N_1:=\frac{4c_0 c\|f\|}{\lambda_1-\lambda}$ 
 subintervals which fall into the category of Case 2, with the remainder falling into the category of Case 1; it is clear from the formula for $m$ that $m>N_1$. If we use \eqref{tv_bd1} and \eqref{tv_bd2} to analyze the behavior of the closed-loop system on the interval $[\bar t,\bar t+m^2]$, we end up with a crude bound of 
\begin{flalign}
&
\left\|
\begin{bmatrix}
\phi(\bar t+m^2)
\\
z_1(\bar t+m^2)
\end{bmatrix}
\right\| 
\leq
c^m\gamma_3^{N_1m}
\left(
\tfrac{\lambda_1+\lambda}{2}
\right)^{m(m-N_1)}
\left\|
\begin{bmatrix}
\phi(\bar t)
\\
z_1(\bar t)
\end{bmatrix}
\right\| +
\nonumber 
\\
&\qquad
2
m
\left(
\tfrac{2\gamma_3}{\lambda_1+\lambda}
\right)^m
(c_2\gamma_3^{m+1})^m
\left(
\tfrac{2}{\lambda_1+\lambda}
\right)^{(m+1)m}
\times
\nonumber
\\
&
\sum_{j=\bar t}^{\bar t+m^2-1}
\left(
\tfrac{\lambda_1+\lambda}{2}
\right)^{\bar t+m^2-j-1}
\left(
\|r(j) \| + \|w(j) \| 
\right)
+
\nonumber
\\
&\qquad
c_2\|r(\bar t+m^2)\|
. \label{tv_bd3}
\end{flalign}
From the choice of $m$ above, 
it is easy to show that
\[
  m^2 \ln\left(
   \tfrac{2\lambda_1}{\lambda_1+\lambda}
  \right)
  \geq
  m\ln(c) + N_1m\ln(\gamma_3)
  +N_1m\ln\left(\tfrac{2}{\lambda+\lambda_1}\right);
\]
this immediately implies that
\[
  c^m \gamma_3^{N_1m}\left(\frac{2}{\lambda+\lambda_1}\right)^{N_1m}
  \leq
  \left(
   \frac{2\lambda_1}{\lambda_1+\lambda}
  \right)^{m^2}
  \quad
\]
\[
  \Leftrightarrow
  \quad
  c^m \gamma_1^{N_1m} 
  \left(
\frac{\lambda_1+\lambda}{2}
\right)^{m(m-N_1)}
\leq
\lambda_1^{m^2}.
\]
Since $\frac{\lambda_1+\lambda}{2}<\lambda_1$, it follows from \eqref{tv_bd3} that there exists a constant $\gamma_4$ so that
\begin{flalign}
 &\left\|
\begin{bmatrix}
\phi(\bar t+m^2)
\\
z_1(\bar t+m^2)
\end{bmatrix}
\right\|  
\leq
\lambda_1^{m^2}
 \left\|
\begin{bmatrix}
\phi(\bar t)
\\
z_1(\bar t)
\end{bmatrix}
\right\| 
+
\nonumber
\\
&
\gamma_4
\sum_{j=\bar t}^{\bar t+m^2 -1}
\lambda_1^{\bar t+m^2-j-1}
\left(
\|r(j) \| + \|w(j) \| 
\right)
+\gamma_4\|r(\bar t+m^2)\|.
\label{tv_bd_v3}
\end{flalign}
Now let $\tau\geq t_0$ be arbitrary. By setting 
$\bar t=\tau, \tau+m^2, \tau+2m^2, \ldots $,
in succession, it follows from \eqref{tv_bd_v3} that
\begin{flalign}
&\left\|
\begin{bmatrix}
\phi(\tau+qm^2)
\\
z_1(\tau+qm^2)
\end{bmatrix}
\right\|  
\leq
\lambda_1^{qm^2}
 \left\|
\begin{bmatrix}
\phi(\tau)
\\
z_1(\tau)
\end{bmatrix}
\right\| 
+
\nonumber
\\
&\qquad
\gamma_4
\sum_{j=\tau}^{\tau+qm^2 -1}
\lambda_1^{\tau+qm^2-j-1}
\left(
\|r(j) \| + \|w(j) \| 
\right)
\nonumber
\\
&\qquad\quad
+\gamma_4
\|r(\tau+qm^2)\|
,\qquad q\in\Z^+. \label{tv_bd4}
\end{flalign}
So
 $\begin{bmatrix}
\phi(t)
\\
z_1(t)
\end{bmatrix}$
is well-behaved at 
$t=\tau, \tau+m^2, \tau+2m^2$, etc;
we can use \eqref{tv1_bd1x} of Case 1, \eqref{tv_bd2x} of Case 2 and 
\eqref{start_eq} to prove that nothing untoward happens between these times.
We conclude that there exists a constant $\gamma_5$ so that
\begin{flalign}
 & \left\|
\begin{bmatrix}
\phi(t)
\\
z_1(t)
\end{bmatrix}
\right\|  
\leq
\gamma_5
\lambda_1^{t-\tau}
\left\|
\begin{bmatrix}
\phi(\tau)
\\
z_1(\tau)
\end{bmatrix}
\right\|  
+
\nonumber
\\
&
\gamma_5
\sum_{j=\tau}^{t-1}\lambda_1^{t-j-1}
\left(
\|r(j) \| + \|w(j) \| 
\right)
+\gamma_5\|r(t)\|
,\quad t\geq \tau.
\end{flalign}
Since $\tau\geq t_0$ is arbitrary, the desired bound is proven.
\end{proof}

A careful examination of the above proof reveals that $\epsilon\rightarrow0 $ as $c_0\rightarrow0 $ and as
$c_0\rightarrow\infty $.
If we do not care about the decay rate, then we can remove this drawback.

\begin{mdframed}
\begin{theorem}\label{lemma_tv_v2}
Suppose that the controller \eqref{control} provides a convolution 
bound for \eqref{plant1} with gain $c\geq1 $ and decay rate $\lambda\in(0,1) $. Then there exists an $\epsilon>0 $ such that for every $c_0\geq0$, there exist $\lambda_* \in(0,1) $ and $\gamma>0 $ so that 
\eqref{control} provides a convolution bound for 
$\bigl(f,s({\cal S},c_0,\epsilon)\bigr)$
with gain $\gamma $ and decay rate $\lambda_* $.
\end{theorem}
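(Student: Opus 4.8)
The plan is to follow the proof of Theorem \ref{lemma_tv} almost verbatim, but to \emph{decouple} the two distinct roles played there by the integer $m$: in that proof the \emph{length} of each subinterval and the \emph{number} of subintervals per block are both equal to $m$, which is exactly what forces $\epsilon=c_0/m^2$ to vanish both as $c_0\to0$ and as $c_0\to\infty$. Here I will instead fix the subinterval length $m$ once and for all, using only $c,\lambda,\|f\|,\|{\cal S}\|$; fix $\epsilon$ in terms of those same quantities; and only then, given $c_0$, let the \emph{number} $K$ of subintervals in a block grow with $c_0$ so as to dilute the (fixed) budget of ``bad'' subintervals.

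More precisely, I would first fix any $\lambda_1\in(\lambda,1)$ and recall, from the Case 1 / Case 2 analysis in the proof of Theorem \ref{lemma_tv}, that over a subinterval of length $m$ a Case 1 subinterval contracts the composite state $(\phi,z_1)$ by the homogeneous factor $\mu_m:=c\bigl(\tfrac{\lambda+\lambda_1}{2}\bigr)^m$, whereas a Case 2 subinterval can expand it by at most $\Gamma_m:=c\gamma_3^{\,m}$, where $\gamma_3:=1+2c\|f\|\|{\cal S}\|\ge1$; moreover a subinterval can be of Case 2 type only if the total variation $\sum\|\theta^*(j+1)-\theta^*(j)\|$ over it exceeds $\delta:=\tfrac{\lambda_1-\lambda}{2c\|f\|}$ (this is the estimate behind the count $N_1$ in that proof). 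Since $\tfrac{\lambda+\lambda_1}{2}<1$ and $c\ge1$ is fixed, I can pick $m$ large enough---depending only on $c,\lambda,\lambda_1$---that $\mu_m<1$. Now on any block of $K$ consecutive subintervals, i.e.\ of length $Km$, the total variation is at most $c_0+\epsilon Km$, hence at most $(c_0+\epsilon Km)/\delta$ of these subintervals are of Case 2 type, and the homogeneous gain over the block is at most $\mu_m^{\,K-(c_0+\epsilon Km)/\delta}\,\Gamma_m^{\,(c_0+\epsilon Km)/\delta}$. Its logarithm is $K\bigl[(1-\tfrac{\epsilon m}{\delta})\ln\mu_m+\tfrac{\epsilon m}{\delta}\ln\Gamma_m\bigr]+\tfrac{c_0}{\delta}(\ln\Gamma_m-\ln\mu_m)$; the bracketed coefficient of $K$ equals $\ln\mu_m<0$ when $\epsilon=0$, so by continuity I fix $\epsilon>0$, depending only on $m,\delta,\mu_m,\Gamma_m$ and hence only on $c,\lambda,\|f\|,\|{\cal S}\|$, small enough that this coefficient is at most $\tfrac12\ln\mu_m<0$. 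The remaining term $\tfrac{c_0}{\delta}(\ln\Gamma_m-\ln\mu_m)$ is nonnegative but, crucially, does not grow with $K$.

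Given $c_0\ge0$, I would then choose $K=K(c_0)$ large enough that $\tfrac{K}{2}\ln\mu_m+\tfrac{c_0}{\delta}(\ln\Gamma_m-\ln\mu_m)\le Km\ln\lambda_*$ for a suitable $\lambda_*\in(0,1)$---possible because the left side tends to $-\infty$ linearly in $K$ while $\lambda_*$ need only be fixed, e.g.\ $\lambda_*:=\mu_m^{1/(4m)}$ works for $K$ sufficiently large. With the block length $L:=Km$ now fixed, the remainder of the argument is identical to that of Theorem \ref{lemma_tv}: iterate the block bound from $\bar t=\tau,\tau+L,\tau+2L,\dots$ to obtain a convolution-type bound on $(\phi,z_1)$ at the block endpoints, then use the within-subinterval estimates \eqref{tv1_bd1x} and \eqref{tv_bd2x}, together with \eqref{start_eq}, to control the behavior strictly between consecutive endpoints; this yields a convolution bound for $\bigl(f,s({\cal S},c_0,\epsilon)\bigr)$ with decay rate $\lambda_*$ and some gain $\gamma=\gamma(c_0)$.

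The main obstacle is not the contraction estimate---that part is genuinely simpler than in Theorem \ref{lemma_tv}, since $\mu_m<1$ is secured with $m$ fixed and the fixed Case 2 budget $c_0/\delta$ is diluted merely by enlarging $K$---but the propagation of the forcing terms $\|r\|$ and $\|w\|$ through a block of $K(c_0)$ subintervals: exactly as in the previous proof one first gets a crude bound whose coefficient in front of the convolution sum is some large but finite constant, and one must then check that replacing the weights $\tfrac{\lambda+\lambda_1}{2}$ and $\gamma_3$ by $\lambda_*$ in the geometric sums costs only another $c_0$-dependent constant factor---this is the step where the constants $\gamma_4$ and $\gamma_5$ of the previous proof reappear, now depending on $c_0$.
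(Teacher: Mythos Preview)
Your proposal is correct and follows essentially the same route as the paper: there too the subinterval length $m$ is fixed first (so that $c\bigl(\tfrac{\lambda_1+\lambda}{2}\bigr)^m<1$), then $\epsilon$ is fixed independently of $c_0$ so that the $\epsilon$-dependent contribution to the per-block homogeneous gain stays strictly contractive, and only afterwards is the number $\bar N$ of subintervals per block (your $K$) chosen large enough, as a function of $c_0$, to absorb the fixed $c_0$-budget of Case~2 subintervals; the remainder of the argument---iterating over blocks and interpolating between block endpoints via the within-subinterval bounds---is identical. The only cosmetic difference is that the paper lets the final decay rate depend on $c_0$, whereas your explicit choice $\lambda_*=\mu_m^{1/(4m)}$ does not, which is a harmless (slightly stronger) variant.
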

\end{mdframed}

\begin{proof}[Proof of Theorem \ref{lemma_tv_v2}]

Suppose the controller \eqref{control} provides a convolution bound
for \eqref{plant1} with gain $c\geq1 $ and a decay rate of $\lambda $.
Fix $\lambda_1\in(\lambda,1) $; let $t_0\in\Z$, $\phi_0\in\R^{n_y\cdot r+n_u\cdot m}$, $z_{1_0}\in\R^{l_1} $, $z_{2_0}\in{\cal X} $, $w\in{\mathbb S}(\R^{r}) $ and $r\in{\mathbb S}(\R^{q})$ be arbitrary. The goal is to prove that for a small-enough $\epsilon$, the controller \eqref{control} provides a convolution bound for
$\bigl(f,s({\cal S},c_0,\epsilon)\bigr)$ for every $c_0\geq0$.
So at this point we will analyze the closed-loop system for an arbitrary $\epsilon>0 $, $c_0\geq0 $, and
$\theta^*\in s({\cal S},c_0,\epsilon)$.

To proceed, let $\bar t\geq t_0$ be arbitrary. For $m\in\N$, we will first analyze closed-loop behavior on intervals of length $m$; define a sequence $\{\bar t_i\}$ by
\[
  \bar t_i=\bar t+im, \qquad i\in\Z^+.
\]
We can rewrite the time-varying plant as
\begin{flalign*}
  y(t+1)
  &=
  \theta(\bar t_i)^\top f(\phi(t))
  + w(t) 
  + \underbrace{\left[
    \theta(t)-\theta(\bar t_i) 
    \right]^\top f(\phi(t)) 
    }_{=:\tilde n_i(t)}, 
    \nonumber
    \\
    &\qquad
  t\in[\bar t_i,\bar t_{i+1}).
\end{flalign*}
On the interval $[\bar t_i,\bar t_{i+1}]$, we regard the plant as time-invariant, but with an extra disturbance: so we obtain
\begin{flalign}
  & \left\|
   \begin{bmatrix}
   \phi(t)
   \\
   z_1(t)
   \end{bmatrix}
   \right\| 
  \leq
  c 
  \lambda^{t-\bar t_i}
 \left\|
   \begin{bmatrix}
   \phi(\bar t_i)
   \\
   z_1(\bar t_i)
   \end{bmatrix}
   \right\| 
  +
  \nonumber
  \\
  &
  \sum_{j=\bar t_i}^{t-1}
  c \lambda^{t-j-1}
     (\|r(j)\|+\|w(j)\|
     +
   \|\tilde n_i(j)\|
   )
      +c\|r(t)\|
   ,
   \nonumber
   \\
   &\qquad
  \qquad
  t\in[\bar t_i,\bar t_{i+1}],\; i\in\Z^+.
\end{flalign}

Using the same idea as in the proof of Theorem 1, we define the difference equation
\[
  \psi(t+1)=\lambda \psi(t) 
  + \|r(t)\| + \|w(t)\| 
  + \|\tilde n_i(t)\|,\qquad 
  t\in[\bar t_i,\bar t_{i+1})
\]
with
   \[
  \psi(\bar t_i)=
  \left\|
   \begin{bmatrix}
   \phi(\bar t_i)
   \\
   z_1(\bar t_i)
   \end{bmatrix}
   \right\|;
\]
it follows that
\begin{equation}
\left\|
   \begin{bmatrix}
   \phi(t)
   \\
   z_1(t)
   \end{bmatrix}
   \right\| \leq c\psi(t)
   +c\|r(t)\| 
   , \;\; 
   t\in[\bar t_i,\bar t_{i+1}].
   \label{star_eq2}
\end{equation}

\noindent{\bf Case 1:} $\|\tilde n_i(t)\|\leq \frac{\lambda_1-\lambda}{2c}\|\phi(t) \|$ for all $t\in[\bar t_i, \bar t_{i+1})$.

Arguing in an identical manner to the proof of Theorem 1, we obtain the following two bounds:
\begin{flalign}
|\psi(t)| 
&\leq
\left(
\tfrac{\lambda_1+\lambda}{2}
\right)^{t-\bar t_i}
|\psi(\bar t_i)|
+
\nonumber
\\
&
\sum_{j=\bar t_i}^{t-1}
\left(
\tfrac{\lambda_1+\lambda}{2}
\right)^{t-j-1}
\left(
2\|r(j) \| + \|w(j) \| 
\right), 
\nonumber \\
&
\qquad 
t=\bar t_i, \bar t_i+1,\ldots, \bar t_{i+1};\label{tv_bd1x}
\end{flalign}
this, in turn, implies that there exists $c_2>c$ so that
\begin{flalign}
&\left\|
\begin{bmatrix}
\phi(\bar t_{i+1})
\\
z_1(\bar t_{i+1})
\end{bmatrix}
\right\| 
\leq
c\left(
\tfrac{\lambda_1+\lambda}{2}
\right)^{m}
\left\|
\begin{bmatrix}
\phi(\bar t_{i})
\\
z_1(\bar t_{i})
\end{bmatrix}
\right\| 
+
\nonumber
\\
&
\sum_{j=\bar t_i}^{\bar t_{i+1} -1}
c_2
\left(
\tfrac{\lambda_1+\lambda}{2}
\right)^{\bar t_{i+1}-j-1}
\left(
\|r(j) \| + \|w(j) \| 
\right)
+c_2\|r(\bar t_{i+1})\|
. \label{tv2_bd1}
\end{flalign}

\noindent{\bf Case 2:} $\|\tilde n_i(t)\|> \frac{\lambda_1-\lambda}{2c}\|\phi(t) \|$ for some $t\in[\bar t_i, \bar t_{i+1})$.

Arguing in an identical manner to the proof of Theorem 1, we obtain the following two bounds: there exists $\gamma_3>0$ so that
\begin{flalign}
|\psi(t)| 
&\leq
\gamma_3^{t-\bar t_i}
|\psi(\bar t_i)|
+
\sum_{j=\bar t_i}^{t-1}
\gamma_3^{t-j-1}
\left(
\gamma_3\|r(j) \| + \|w(j) \| 
\right), 
\nonumber \\
&
\qquad 
t=\bar t_i, \bar t_i+1,\ldots, \bar t_{i+1},\label{tv2_bd2x}
\end{flalign}
\begin{flalign}
&
\left\|
\begin{bmatrix}
\phi(\bar t_{i+1})
\\
z_1(\bar t_{i+1})
\end{bmatrix}
\right\| 
\leq
c\gamma_3^{m}
\left\|
\begin{bmatrix}
\phi(\bar t_{i})
\\
z_1(\bar t_{i})
\end{bmatrix}
\right\| 
+
\nonumber
\\
&
\sum_{j=\bar t_i}^{\bar t_{i+1} -1}
c
\gamma_3^{\bar t_{i+1}-j-1}
\left(
\gamma_3\|r(j) \| + \|w(j) \| 
\right)
+c\|r(\bar t_{i+1})\|
\nonumber \\
&\leq
c\gamma_3^{m}
\left\|
\begin{bmatrix}
\phi(\bar t_{i})
\\
z_1(\bar t_{i})
\end{bmatrix}
\right\| 
+
c_2
\gamma_3
\left(
\tfrac{2\gamma_3}{\lambda_1+\lambda}
\right)^m
\times
\nonumber
\\
&
\sum_{j=\bar t_i}^{\bar t_{i+1} -1}
\left(
\tfrac{\lambda_1+\lambda}{2}
\right)^{\bar t_{i+1}-j-1}
\left(
\|r(j) \| + \|w(j) \| 
\right)
+c_2\|r(\bar t_{i+1})\|
. \label{tv2_bd2}
\end{flalign}
This completes Case 2.

{At this point we combine Case 1 and 2.} We would like to analyze $\bar N\in\N $ intervals of length $m$; for now we let $\bar N $ be free. We see that
\[
  \sum_{j=\bar t}^{\bar t+m\bar N-1}
  \|\theta(j+1)-\theta(j)\|
  \leq
  c_0 + \epsilon m\bar N.
\]
Let $N_1 $ denote the number of intervals of the form $[\bar t_i,\bar t_{i+1})$ which lie in $[\bar t,\bar t+m\bar N]$ which fall into Case 2; it is easy to see that $N_1 $ satisfies
\begin{flalign}
 &\qquad
  N_1
  \times
  \frac{\lambda_1-\lambda}{2c}
  \leq
  \left(c_0 + \epsilon m\bar N\right)\|f\|
  \nonumber
\\
\Rightarrow&
  N_1 
  \leq
    \left(\frac{2c\|f\|}{\lambda_1-\lambda}\right)
\times
  c_0 
  + 
\left(  \frac{2c\|f\|}{\lambda_1-\lambda}
    \right)  \times
  \epsilon \times m\bar N;
  \label{N1_bdx1}
\end{flalign}
observe that $N_1 $ depends on both $c_0 $ and $\epsilon $.
Using 
\eqref{tv2_bd1} and \eqref{tv2_bd2} we obtain
\begin{flalign}
  &
  \left\|
\begin{bmatrix}
\phi(\bar t+m\bar N)
\\
z_1(\bar t+m\bar N)
\end{bmatrix}
\right\| 
\leq
c^{\bar N}
\left(
\tfrac{\lambda_1+\lambda}{2}
\right)^{m(\bar N-N_1)}
\gamma_3^{mN_1}
\left\|
\begin{bmatrix}
\phi(\bar t)
\\
z_1(\bar t)
\end{bmatrix}
\right\|
+
\nonumber 
\\
&\qquad
2
\bar N
\left(
\tfrac{2\gamma_3}{\lambda_1+\lambda}
\right)^{\bar N}
(c_2\gamma_3^{m+1})^{\bar N}
\left(
\tfrac{2}{\lambda_1+\lambda}
\right)^{(m+1)\bar N}
\times
\nonumber
\\
&
\sum_{j=\bar t}^{\bar t+m\bar N -1}
\left(
\tfrac{\lambda_1+\lambda}{2}
\right)^{\bar t+m\bar N-j-1}
\left(
\|r(j) \| + \|w(j) \| 
\right)
\nonumber
\\
&\qquad
+c_2\|r(\bar t_{(q+1)m})\|
. 
\label{N1_main}
\end{flalign}
At this point, we will choose quantities $m,\epsilon$ and $\bar N $, in that order, so that the key gain 
$c^{\bar N}
\left(
\tfrac{\lambda_1+\lambda}{2}
\right)^{m(\bar N-N_1)}
\gamma_3^{mN_1}
<1 $.
First of all, we apply the bound on $N_1 $ given in \eqref{N1_bdx1}
to this key gain:
\begin{flalign}
&
c^{\bar N}
\left(
\tfrac{\lambda_1+\lambda}{2}
\right)^{m(\bar N-N_1)}
\gamma_3^{mN_1}
=
  \left[
  c
    \left(
      \tfrac{\lambda_1+\lambda}{2}
    \right)^{m}
    \right]^{\bar N}
\left[
\left(
\tfrac{2\gamma_3}{\lambda_1+\lambda}
\right)^{N_1}
\right]^m
\nonumber
\\
&\leq
    \left[
  c
    \left(
      \tfrac{\lambda_1+\lambda}{2}
    \right)^{m}
    \right]^{\bar N}
\left[
\left(
\tfrac{2\gamma_3}{\lambda_1+\lambda}
\right)^{
  \left[
        \left(\frac{2c\|f\|}{\lambda_1-\lambda}\right)
      c_0 
      + 
    \left(  \frac{2c\|f\|}{\lambda_1-\lambda}
        \right)  
      \epsilon  m\bar N\right]
}
\right]^m.
\label{N1_1}
\end{flalign}
{\bf Now choose} $m$ so that 
$
  c
        \left(
          \frac{\lambda_1+\lambda}{2}
        \right)^{m}
    =:\lambda_2
    <
    1
$,
i.e. any
$
  m
  >
  \frac{\ln(c)
  }{\ln(2)-\ln(\lambda_1+\lambda)}
$.
So rewriting \eqref{N1_1}, we now obtain
\begin{flalign*}
&
c^{\bar N}
\left(
\tfrac{\lambda_1+\lambda}{2}
\right)^{m(\bar N-N_1)}
\gamma_3^{mN_1}
\leq
\nonumber
\\
&
    \left[
  \left(
\tfrac{2\gamma_3}{\lambda_1+\lambda}
\right)^{
  \left(\frac{2c\|f\|}{\lambda_1-\lambda}\right)
    \times
      c_0 m
}
\right]
  \left( 
        \left[\left(
                    \tfrac{2\gamma_3}{\lambda_1+\lambda}
                    \right)^{
                      \left(  \frac{2c\|f\|}{\lambda_1-\lambda}
                            \right)  \times
                          \epsilon \times m^2
                    } \right]
    \times
    \lambda_2
    \right)^{ \bar N}.
\end{flalign*}
Now observe that
\[
\lim_{\epsilon \rightarrow 0 }
\left[
\left(
                    \tfrac{2\gamma_3}{\lambda_1+\lambda}
                    \right)^{
                      \left(  \frac{2c\|f\|}{\lambda_1-\lambda}
                            \right)  \times
                          \epsilon \times m^2
                    }
                    \right]
    =1,
\]
so now choose $\epsilon>0 $ so that 
\[
   \underbrace{\left[\left(
                    \tfrac{2\gamma_3}{\lambda_1+\lambda}
                    \right)^{
                      \left(  \frac{2c\|f\|}{\lambda_1-\lambda}
                            \right)  \times
                          \epsilon \times m^2
                    } \right]
    \times
    \lambda_2}_{
    =:
    \lambda_{3}
    }
    <1;
\]
notice that $\epsilon $ is independent of $c_0 $.
With this choice we now have
\[
  c^{\bar N}
\left(
\tfrac{\lambda_1+\lambda}{2}
\right)^{m(\bar N-N_1)}
\gamma_3^{mN_1}
\leq
 \left[
  \left(
\tfrac{2\gamma_3}{\lambda_1+\lambda}
\right)^{
  \left(\frac{2c\|f\|}{\lambda_1-\lambda}\right)
    \times
      c_0 m
}
\right]
\times
\lambda_3^{\bar N}.
\]
Last of all, now choose $\bar N $ so that
$$
     \underbrace{\left[
  \left(
\tfrac{2\gamma_3}{\lambda_1+\lambda}
\right)^{
  \left(\frac{2c\|f\|}{\lambda_1-\lambda}\right)
    \times
      c_0 m
}
\right]
\times
\lambda_3^{\bar N}}_{=:\lambda_4}
<
1;
$$
any
$
  \bar N
  >
  \frac{2c c_0 m\|f\|
  \left[
\ln(2\gamma_3)-\ln(\lambda_1+\lambda)
  \right] }{
  (\lambda-\lambda_1)\ln(\lambda_3)
  }
$
will do. Observe that $\bar N $ depends on $c_0 $.

So incorporating all of the above, there exists $\gamma_{4}>0$ 
(which clearly depends on $c_0 $ via $\bar N $)
so that we can rewrite \eqref{N1_main} as
\begin{flalign}
  &
  \left\|
\begin{bmatrix}
\phi(\bar t+m\bar N)
\\
z_1(\bar t+m\bar N)
\end{bmatrix}
\right\| 
\leq
\lambda_4
\left\|
\begin{bmatrix}
\phi(\bar t)
\\
z_1(\bar t)
\end{bmatrix}
\right\|
+
\nonumber 
\\
&\qquad
\gamma_4
\sum_{j=\bar t}^{\bar t+m\bar N -1}
\left(
\tfrac{\lambda_1+\lambda}{2}
\right)^{\bar t+m\bar N-j-1} 
\left(
\|r(j) \| + \|w(j) \| 
\right)
+
\nonumber
\\
&\qquad\quad
\gamma_4\|r(\bar t+m\bar N)\|
.
\label{N1_main2}
\end{flalign}
Now let $\tau\geq t_0 $ be arbitrary. By setting $\bar t=\tau,\tau+m\bar N, \tau+2m\bar N,\ldots $, in succession, with 
$\lambda_5:=\max\left\{
\lambda_4^{\tfrac{1}{m\bar N}}, \tfrac{\lambda_1+\lambda}{2}
\right\} $ 
(which clearly depends on $c_0 $ via $\bar N $)
it follows from \eqref{N1_main2} that
\begin{flalign}
&\left\|
\begin{bmatrix}
\phi(\bar t+q\bar Nm)
\\
z_1(\bar t+q\bar Nm)
\end{bmatrix}
\right\|  
\leq
\lambda_5^{q\bar Nm}
 \left\|
\begin{bmatrix}
\phi(\bar t)
\\
z_1(\bar t)
\end{bmatrix}
\right\| 
+
\nonumber
\\
&
\gamma_4
\sum_{j=\bar t}^{\bar t+q\bar Nm -1}
\lambda_5^{\bar t+q\bar Nm-j-1}
\left(
\|r(j) \| + \|w(j) \| 
\right)
+
\nonumber
\\
&\qquad\qquad
\gamma_4
\|r(\bar t+q\bar Nm)\|
,\qquad q\in\Z^+. \label{tv2_bd4}
\end{flalign}
So
 $\begin{bmatrix}
\phi(t)
\\
z_1(t)
\end{bmatrix}$
is well-behaved at 
$t=\tau, \tau+m^2, \tau+2m^2$, etc;
we can use \eqref{tv_bd1x} of Case 1, \eqref{tv2_bd2x} of Case 2 and 
\eqref{star_eq2} to prove that nothing untoward happens between these times.
We conclude that there exists a constant $\gamma_5$ so that
\begin{flalign}
 & \left\|
\begin{bmatrix}
\phi(t)
\\
z_1(t)
\end{bmatrix}
\right\|  
\leq
\gamma_5
\lambda_5^{t-\tau}
\left\|
\begin{bmatrix}
\phi(\tau)
\\
z_1(\tau)
\end{bmatrix}
\right\|  
+
\nonumber
\\
&
\gamma_5
\sum_{j=\tau}^{t-1}\lambda_5^{t-j-1}
\left(
\|r(j) \| + \|w(j) \| 
\right)
+\gamma_5\|r(t)\|
,\quad t\geq \tau.
\end{flalign}
Since $\tau\geq t_0$ is arbitrary, the desired bound is proven.
\end{proof}

\section{Tolerance to Unmodelled Dynamics}

We now consider the time-varying plant \eqref{plantTV} with the term
$d_\Delta(t)\in\R^r $ added to represent unmodelled dynamics:
\begin{flalign}
\label{plant_umd}
  y(t+1)
  &=
  \theta^*(t)^\top f\bigl(\phi(t)\bigr) 
  +
  w (t)
  +
  d_\Delta(t),\quad
  \phi(t_0)=\phi_0.
\end{flalign}
Here we consider (a generalized version of) a class of
unmodelled dynamics which is common in the adaptive control literature---see
\cite{kreiss86} and \cite{pole18}.
With 
$g:\R^{n_y\cdot r+n_u\cdot m} \rightarrow \R $
a map with a bounded gain, $\beta\in(0,1) $ and $\mu>0 $, we consider
\begin{subequations}
\label{d_deltaX}
\begin{flalign}
m(t+1)
&=
\beta m(t) +\beta |g(\phi(t))|, 
\quad 
m(t_0)=m_0
\label{d_delta1}
\\
\|d_\Delta(t)\|
&\leq
\mu m(t)+\mu |g(\phi(t))|,\quad t\geq t_0. \label{d_delta2}
\end{flalign}
\end{subequations}
It turns out that this model subsumes 
{a large class of}
classical additive uncertainty, multiplicative uncertainty, and uncertainty in a coprime factorization, with 
a
strict causality constraint;
see \cite{pole18} for a more detailed explanation.
We will now show that if the controller \eqref{control} provides a convolution bound for 
$\bigl(f,s({\cal S},c_0,\epsilon)\bigr)$,
then a degree of tolerance to unmodelled dynamics can be proven.

\begin{mdframed}
\begin{theorem}\label{umd_thm}
Suppose that the controller \eqref{control} provides a convolution bound for $\bigl(f,s({\cal S},c_0,\epsilon)\bigr)$ 
with a gain $c_1 $ and decay rate $\lambda_1\in(0,1) $. 
Then for every $\beta\in(0,1) $ and $\lambda_2\in(\max\{\lambda_1,\beta\},1)$, there exist $\bar\mu>0$ and $c_2>0 $ so that for every $\theta^*\in s({\cal S},c_0,\epsilon)$, $\mu\in(0,\bar \mu) $,
 $t_0\in\Z$, $\phi_0\in\R^{n_y\cdot r+n_u\cdot m}$,  $z_{1_0}\in\R^{l_1} $, $z_{2_0}\in{\cal X}\subset\R^{l_2} $, $r \in {\mathbb S}(\R^q)$ and $w \in {\mathbb S}(\R^r) $, when the controller \eqref{control} is applied to the plant \eqref{plant_umd} with $d_\Delta$ satisfying \eqref{d_deltaX}, the following holds:
\begin{flalign}
&   \left\|
   \begin{bmatrix}
   \phi(t)
   \\
   z_1(t)
   \\
   m(t)
   \end{bmatrix}
   \right\| 
  \leq
  c_2 
  \lambda_2^{t-t_0}
 \left\|
       \begin{bmatrix}
       \phi_0
       \\
       z_{1_0}
       \\
       m_0
       \end{bmatrix}
\right\|     
  +
  \nonumber
  \\
  &
  \sum_{j=t_0}^{t-1}
  c_2 \lambda_2^{t-j-1}
   (\|r(j)\|+\|w(j)\|)
   +c_2\|r(t)\|
   ,
  \quad
  t\geq t_0.
\end{flalign}
\end{theorem}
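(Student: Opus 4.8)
Here is how I would prove Theorem \ref{umd_thm}. The plan is to treat the unmodelled-dynamics term $d_\Delta$ as an extra exogenous disturbance, apply the hypothesized convolution bound for $\bigl(f,s({\cal S},c_0,\epsilon)\bigr)$, and then close the feedback loop between $\phi$ and the monitoring signal $m$ by a small-gain argument that exploits $\lambda_2>\max\{\lambda_1,\beta\}$. Fix $\beta\in(0,1)$ and $\lambda_2\in(\max\{\lambda_1,\beta\},1)$, choose an auxiliary rate $\lambda'\in(\max\{\lambda_1,\beta\},\lambda_2)$, and let $\theta^*\in s({\cal S},c_0,\epsilon)$, $\mu>0$, $t_0\in\Z$, $\phi_0$, $z_{1_0}$, $z_{2_0}\in{\cal X}$, $w$, $r$ be arbitrary; write $v(t):=\left\|\begin{bmatrix}\phi(t)\\ z_1(t)\end{bmatrix}\right\|$. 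Since \eqref{plant_umd} is exactly \eqref{plantTV} with $w$ replaced by $w+d_\Delta$, Definition \ref{def_conv2} applied with $\tau=t_0$ gives
\[
  v(t)\leq c_1\lambda_1^{t-t_0}v(t_0)+\sum_{j=t_0}^{t-1}c_1\lambda_1^{t-j-1}\bigl(\|r(j)\|+\|w(j)\|+\|d_\Delta(j)\|\bigr)+c_1\|r(t)\|,\qquad t\geq t_0.
\]

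Next I would strip $g$ out of \eqref{d_deltaX} using its bounded gain, $|g(\phi(t))|\leq\|g\|\,\|\phi(t)\|\leq\|g\|\,v(t)$: solving the scalar recursion \eqref{d_delta1} yields
\[
  m(t)\leq\beta^{t-t_0}m_0+\|g\|\sum_{j=t_0}^{t-1}\beta^{t-j}v(j),
\]
and then \eqref{d_delta2} bounds $\|d_\Delta(j)\|$, for $j\geq t_0$, by $\mu\beta^{j-t_0}m_0+\mu\|g\|\sum_{k=t_0}^{j-1}\beta^{j-k}v(k)+\mu\|g\|\,v(j)$. Substituting this into the displayed convolution bound and interchanging the order of the resulting double sum produces inner sums of the shape $\sum_{j}c_1\lambda_1^{t-j-1}\beta^{j-k}$; because $\max\{\lambda_1,\beta\}<\lambda'$, every such sum is bounded by $\kappa_0\,{\lambda'}^{t-k}$ for a constant $\kappa_0$ depending only on $c_1,\lambda_1,\beta,\lambda'$ — this is precisely where the strict inequality is needed, including the $\beta=\lambda_1$ case with its spurious linear factor. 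Bounding also $\lambda_1^{t-j-1}\leq{\lambda'}^{t-j-1}$ and collecting terms (note that every $d_\Delta$ contribution occurs at a time $\leq t-1$, so no $v(t)$ survives on the right), one arrives at a self-referential inequality whose loop gain is proportional to $\mu$:
\[
  v(t)\leq \bigl(c_1+\alpha\mu\bigr){\lambda'}^{t-t_0}\left\|\begin{bmatrix}\phi_0\\ z_{1_0}\\ m_0\end{bmatrix}\right\|+\alpha\mu\sum_{j=t_0}^{t-1}{\lambda'}^{t-j-1}v(j)+\alpha\sum_{j=t_0}^{t-1}{\lambda'}^{t-j-1}\bigl(\|r(j)\|+\|w(j)\|\bigr)+\alpha\|r(t)\|,
\]
for $t\geq t_0$, where $\alpha\geq1$ is a constant independent of all the data and of $\mu$.

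The last step closes the loop. Since the right-hand side depends only on $v(t_0),\dots,v(t-1)$ (modulo the harmless instantaneous term $\alpha\|r(t)\|$), a standard discrete comparison (Gronwall-type) argument bounds $v(t)$ by the solution of the scalar linear recursion with decay rate $\lambda'+\alpha\mu$ and the same initial condition and forcing. Choosing $\bar\mu:=(\lambda_2-\lambda')/\alpha>0$, for every $\mu\in(0,\bar\mu)$ this rate satisfies $\lambda'+\alpha\mu<\lambda_2$, so there is a constant $\tilde c$ (depending only on $c_1,\lambda_1,\beta,\lambda_2,\|g\|$) with
\[
  v(t)\leq \tilde c\,\lambda_2^{t-t_0}\left\|\begin{bmatrix}\phi_0\\ z_{1_0}\\ m_0\end{bmatrix}\right\|+\tilde c\sum_{j=t_0}^{t-1}\lambda_2^{t-j-1}\bigl(\|r(j)\|+\|w(j)\|\bigr)+\tilde c\,\|r(t)\|,\qquad t\geq t_0.
\]
Feeding this back into the bound on $m(t)$ from the second paragraph (using $\beta<\lambda_2$ and the same geometric-sum estimate) yields a bound of the identical form for $m(t)$; adding the two estimates and enlarging the constant gives the claimed bound on $\left\|\begin{bmatrix}\phi(t)\\ z_1(t)\\ m(t)\end{bmatrix}\right\|$ with a suitable $c_2$.

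I expect the main obstacle to be the small-gain bookkeeping of the middle paragraph: carefully unwinding the interconnection $d_\Delta\leftrightarrow m\leftrightarrow\phi$ through the double-sum interchange, extracting the clean self-referential inequality whose loop gain is $O(\mu)$, and verifying that each constant produced ($\kappa_0$, $\alpha$, $\tilde c$, $c_2$, $\bar\mu$) is genuinely uniform in $t_0$, in $\phi_0,z_{1_0},z_{2_0},m_0$, and in $w,r$ — a uniformity inherited directly from that of the hypothesized convolution bound. The auxiliary rate $\lambda'$ strictly between $\max\{\lambda_1,\beta\}$ and $\lambda_2$ is what simultaneously leaves room for the geometric-sum estimates (valid for any rate above $\max\{\lambda_1,\beta\}$) and for the $O(\mu)$ inflation of the decay rate coming from the comparison step, which must still stay below $\lambda_2$.
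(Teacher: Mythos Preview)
Your proposal is correct and complete in outline, but it takes a genuinely different route from the paper's proof. The paper treats $d_\Delta$ as an extra disturbance exactly as you do, but then, instead of substituting the explicit bound on $m(t)$ into the convolution inequality and closing the loop via a Gronwall/small-gain argument, it introduces a majorizing two-dimensional linear system: setting $\tilde\phi(t+1)=\lambda_1\tilde\phi(t)+c_1\|r(t)\|+c_1\|w(t)\|+c_1\mu\tilde m(t)+c_1\mu\|g\|\tilde\phi(t)$ and $\tilde m(t+1)=\beta\tilde m(t)+\beta\|g\|\tilde\phi(t)$, one proves by induction that $v(t)\leq\tilde\phi(t)+c_1\|r(t)\|$ and $|m(t)|\leq\tilde m(t)$, and then the whole interconnection is captured by the $2\times 2$ matrix
\[
A_{\mathrm{cl}}(\mu)=\begin{bmatrix}\lambda_1+c_1\|g\|\mu & c_1\mu\\ \beta\|g\| & \beta\end{bmatrix},
\]
which for $\mu=0$ has eigenvalues $\lambda_1,\beta<\lambda_2$; a Lyapunov continuity argument produces $\bar\mu$ and a uniform bound $\|A_{\mathrm{cl}}(\mu)^k\|\leq\gamma_1\lambda_2^k$ directly. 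This packages the interconnection structurally and avoids the auxiliary rate $\lambda'$, the double-sum interchange, and the discrete Gronwall step; your approach is more elementary and makes the $O(\mu)$ loop gain explicit, at the cost of heavier bookkeeping and the intermediate $\lambda'$. Both yield the same quantifier structure and uniformity.
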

\end{mdframed}

\begin{remark}
This proof is based, in part, on the proof of Theorem 3 of \cite{pole18},
which deals with a much simpler setup.
\end{remark}

\begin{proof}[Proof of Theorem \ref{umd_thm}]
Fix $\beta\in(0,1)$ and $\lambda_2\in(\max\{\lambda_1,\beta\},1)$ and let
$\theta^*\in s({\cal S},c_0,\epsilon)$,
$t_0\in\Z$, $\phi_0\in\R^{n_y\cdot r+n_u\cdot m}$, $z_{1_0}\in\R^{l_1} $, $z_{2_0}\in{\cal X} $, $w\in{\mathbb S}(\R^{r}) $ and $r\in{\mathbb S}(\R^{q})$ be arbitrary. So by hypothesis:
\begin{flalign}
   &\left\|
   \begin{bmatrix}
   \phi(t)
   \\
   z_1(t)
   \end{bmatrix}
   \right\| 
  \leq
  c_1
  \lambda_1^{t-\tau}
 \left\|
   \begin{bmatrix}
   \phi(\tau)
   \\
   z_1(\tau)
   \end{bmatrix}
   \right\| 
  +
  \nonumber
  \\
  &\quad
  \sum_{j=\tau}^{t-1}
  c_1 \lambda_1^{t-j-1}
   (\|r(j)\|+\|w(j)\|+\|d_\Delta(j)\| )
+c_1\|r(t)\|,
\nonumber
\\
  &\qquad\qquad\qquad
  t\geq\tau\geq t_0.
  \label{umd_bd4}
\end{flalign}
To convert this inequality to an equality, we consider the 
associated difference equations
\[
  \tilde\phi(t+1)
  =
  \lambda_1 \tilde\phi(t)
  + c_1 \|r(t)\| 
  + c_1 \|w(t) \| 
  + c_1\mu \tilde m(t) 
  + c_1\mu \|g\| \tilde \phi(t),
  \]
  \[ 
   \tilde \phi(t_0)=c_1
  \left\| \begin{bmatrix}
       \phi_0
       \\
       z_{1_0}
       \end{bmatrix}
       \right\|,
\]
together with the difference equation based on \eqref{d_delta1}:
\[
  \tilde m(t+1) 
  =
  \beta \tilde m(t)
  + \beta \|g\|
   \tilde\phi(t), 
  \quad \tilde m(t_0)=|m_0|.
\]
Using induction together with \eqref{umd_bd4}, \eqref{d_delta1}, and \eqref{d_delta2}, we can prove that
\begin{subequations}
\label{umd_prv1}
\begin{flalign}
\left\| 
\begin{bmatrix}
       \phi(t)
       \\
       z_1(t)
       \end{bmatrix}
       \right\|
       &\leq
       \tilde \phi(t)
       +c_1\|r(t)\|
       ,
       \\
       \nonumber 
       \\
       |m(t)|
       &\leq \tilde m(t),
       \qquad\quad t\geq t_0. 
\end{flalign}
\end{subequations}
If we combine the difference equations for $\tilde \phi(t)
$ and $\tilde m(t)$, we obtain
\begin{flalign}
\begin{bmatrix}
\tilde\phi(t+1)
\\
\tilde m(t+1)
\end{bmatrix}
&=
\underbrace{
\begin{bmatrix}
\lambda_1+c_1\|g\| \mu & c_1\mu
\\
\beta \|g\| & \beta
\end{bmatrix}
}_{=: A_{\text{cl}}(\mu) }
\begin{bmatrix}
\tilde\phi(t)
\\
\tilde m(t)
\end{bmatrix}
+
\nonumber
\\
&\qquad
\begin{bmatrix}
c_1
\\
0
\end{bmatrix}
\left(
\|r(t) \| +\|w(t) \| 
\right),
\qquad 
t\geq t_0. \label{umd_prv2}
\end{flalign}
Now we see that 
\[
  A_{\text{cl}}(\mu) 
  \rightarrow
  \begin{bmatrix}
\lambda_1 & 0
\\
\beta \|g\| & \beta
\end{bmatrix}
\]
as $\mu\rightarrow0$, and this matrix has eigenvalues of $\{\lambda_1,\beta\} $
which are both less that $\lambda_2<1 $.
Using a standard Lyapunov argument, it is easy to prove that 
there exist
$\bar \mu>0 $ and $\gamma_1>0 $ such that for all $\mu\in(0,\bar\mu]$, we have
\[
  \left\|
A_{\text{cl}}(\mu)^k
  \right\|
  \leq   
\gamma_1
  \lambda_2^k,
  \qquad 
  k\geq0;
\]
if we use this in \eqref{umd_prv2} and then apply the bound in \eqref{umd_prv1}, it follows that 
\begin{flalign}
   & \left\|
   \begin{bmatrix}
   \phi(t)
   \\
   z_1(t)
   \\
    m(t)
   \end{bmatrix}
   \right\| 
   \leq
   c_1\gamma_1\lambda_2^{t-t_0}
    \left\|
       \begin{bmatrix}
       \phi_0
       \\
       z_{1_0}
       \\
       m_0
       \end{bmatrix}
\right\| 
+
\nonumber
\\
&\qquad
\sum_{j=t_0}^{t-1}
c_1\gamma_1\lambda_2^{t-j-1}
\left( 
\|r(j) \| +\|w(j) \| 
\right)
+c_1\|r(t)\|
,
\quad t\geq t_0
\end{flalign}
as desired.
\end{proof}

\section{Applications}

In this section, we will apply Theorems 1--3 to various adaptive control problems. In these examples, it turns out that we do not need $z_1 $ as part of the controller.

\subsection{First-Order One-Step-Ahead Adaptive Control}

Here we consider the 1st-order linear time-invariant plant
\begin{flalign}
y(t+1)
&=
a y(t) + b u(t) + w(t), \nonumber
\\
&=
{\underbrace{\begin{bmatrix}
  a \\ b
  \end{bmatrix}}_{=:{\theta^*}^\top } }^\top 
{\underbrace{\begin{bmatrix}
  y(t) \\ u(t)
  \end{bmatrix}}_{=:{\phi(t)} } }
   + w(t),
\quad y(t_0)=y_0.
\label{plant_first}
\end{flalign}
We have $y(t)\in\R $ as the output, $u(t)\in\R $ as the input, and 
$w(t)\in\R $ as the noise or disturbance. Here, $\theta^* $ is unknown but lies in {\bf closed and bounded set} ${\cal S}\subset\R^2 $; to ensure controllability we require that
 $\begin{bmatrix}
  a \\ 0
  \end{bmatrix}
  \notin {\cal S} $
for any $a\in\R $.
The control objective is to track a reference signal $y^* (t) $ asymptotically; we assume that we know it one step ahead.

In \cite{scl17} the case of ${\cal S} $ being convex is considered.
An adaptive controller is designed based on the ideal projection algorithm, and it is proven that a convolution bound is provided. In that paper this is leveraged to prove a degree of tolerance to time-variation and unmodelled dynamics, though the results there
are not quite as strong as those provided by Theorems 1--3.

Now we turn to the more general case of ${\cal S} $ not convex.
This was considered in \cite{cdc18} and a convolution bound was proven\footnote{Technically speaking, the bound \eqref{conv_v1}
was only proven for $t\geq \tau = t_0 $. However,
since the controller is time-invariant, the extension to 
$t\geq\tau\geq t_0 $ follows immediately.}, but nothing was proven about robustness to
time-variation and to unmodelled dynamics.
Here we will show that the controller proposed there fits into the framework of this paper, so that Theorems 1--3 can be applied.
In this case, it is proven in \cite{cdc18} that ${\cal S}$ can be covered by two convex and compact sets ${\cal S}_1 $ and ${\cal S}_2 $ so that, for every 
$
\begin{bmatrix}
a \\ b
\end{bmatrix}
\in {\cal S}_1 \cup {\cal S}_2
 $ 
 we have that $b\neq0 $.
 To proceed, we use two parameter estimators---one for ${\cal S}_1 $ and one for ${\cal S}_2 $---and then use a switching adaptive controller to switch between 
 the estimates as necessary.
For each $i\in \{1,2\} $ and given an estimate
 $\hat\theta_i(t)$ at time $t>t_0$, we have a prediction error of
\begin{equation*}
e_i(t+1):=
y(t+1)-{\hat\theta_i(t)}^\top\phi(t);
\end{equation*}
estimator updates are computed by
\begin{equation}
\check\theta_i(t+1)
=
\left\{
\begin{matrix*}[l]
\hat\theta_i(t) & & \text{if }\phi(t)=0
\\
\hat\theta_i(t)
+
\frac{\phi(t)}{\|\phi(t)\|^2}
e_i(t+1) & & \text{otherwise}
\end{matrix*}\right. 
\label{est2_a}
\end{equation} 
\begin{equation}
\hat\theta_i(t+1)=\text{Proj}_{{\cal S}_i}
\left\{
\check\theta_i(t+1)
\right\}.
\label{est2_b}
\end{equation}
We partition $\hat\theta_i(t)$ in a natural way by 
$\hat\theta_i(t)
=:
\begin{bmatrix}
  \hat a_i(t) \\ \hat b_i(t)
  \end{bmatrix}
 $.
We define a switching signal $\sigma: \Z\rightarrow \{1,2\} $ to choose which parameter estimates to use in the control law at any point in time. 
Namely, with $\sigma(t_0)\in\{1,2\} $, the choice is 
\begin{equation}
\sigma(t+1)
=
{\arg\min}_{i\in\{1,2\}} \; 
|e_i(t+1)|,
\qquad t\geq t_0,
\label{switch2}
\end{equation}
i.e. it is the index corresponding to the smallest prediction error.
Next we apply the Certainty Equivalence Principle to yield
 \begin{equation}
 u(t)
 =
 -\frac{\hat a_{\sigma(t)}(t)}{\hat b_{\sigma(t)}(t)}y(t)
 +
 \frac{1}{\hat b_{\sigma(t)}(t)} y^*(t+1).
 \label{control2}
 \end{equation}

We observe here that the controller \eqref{est2_a}--\eqref{control2} fits into the paradigm of Section 2: we set
\begin{flalign*}
{\cal X}
&=
{\cal S}_1 \times {\cal S}_2 \times \{1,2\},
\\
z_1(t)
&=\varnothing,
\\
z_2(t)
&=
\begin{bmatrix}
\hat \theta_1(t)
\\
\hat \theta_2(t)
\\ 
\sigma(t)
\end{bmatrix},
\\
r(t)
&=
y^*(t+1).
\end{flalign*}
In \cite{cdc18} it is proven that \eqref{est2_a}--\eqref{control2} 
provides a convolution bound for \eqref{plant_first};
by Theorems 1--3 we
immediately see that the same is true in the presence of time-variation and/or unmodelled dynamics.

\subsection{Pole-Placement Adaptive Control}

In this section, we consider the Pole-Placement Adaptive Control problem. 
We consider the $n^{\text{th}}$-order linear time-invariant plant
\begin{flalign}
y(t+1)
&=
\sum_{j=0}^{n-1} a_{j+1}y(t-j)
+\sum_{j=0}^{n-1} b_{j+1}u(t-j)
+w(t) 
\nonumber
\\
&=
\underbrace{\begin{bmatrix}
y(t) \\ \vdots \\ y(t-n+1) 
\\
u(t) \\ \vdots \\ u(t-n+1)
\end{bmatrix}^\top}_{=:\phi(t)^\top}
\underbrace{\begin{bmatrix}
a_1 \\ \vdots \\ a_n 
\\
b_1 \\ \vdots \\ b_n
\end{bmatrix}}_{=:\theta^*}
+w(t),
\qquad 
t\geq t_0
\label{pole_plant1}
\end{flalign}
with $\phi(t_0)=\phi_0$.
We have $y(t)\in\R $ as the output, $u(t)\in\R $ as the input, and 
$w(t)\in\R $ as the noise or disturbance. Here, $\theta^* $ is unknown but lies in a known set ${\cal S}\subset\R^{2n} $. Associated with this plant model are the polynomials
\[
  \mathbf A(z^{-1})=1-a_1 z^{-1}-a_2 z^{-2}\cdots-a_n z^{-n},\qquad \text{and }
\]
\[
 \mathbf B(z^{-1})=b_1 z^{-1}+b_2 z^{-2}\cdots+b_n z^{-n};
\]
We impose the following assumption:
\begin{assm}
${\cal S}$ is compact, and for each $\theta^*\in{\cal S} $,
the corresponding polynomials
$\mathbf A(z^{-1})$ and $\mathbf B(z^{-1})$
are coprime.
\end{assm}

The objective here is to obtain some form of stability with a secondary objective that of asymptotic tracking of a reference signal $y^*(t) $; 
the plant may be non-minimum phase, which limits the tracking goal. 

In \cite{pole18} the case of ${\cal S} $ convex is considered.
An adaptive controller is designed based on a modified version of
the ideal projection algorithm, and it is proven that a convolution bound is provided; this is leveraged there to prove a degree of tolerance to
time-variation and unmodelled dynamics, much like that provided by
Theorems 1 and 3.

Now we turn to the more general case of ${\cal S} $ not convex. 
This was also considered in \cite{pole18} subject to 
\begin{assm}
${\cal S}\subset {\cal S}_1 \cup {\cal S}_2 $ with ${\cal S}_1$ and ${\cal S}_2$ compact and convex, and for each $\theta^*\in{\cal S}_1 \cup {\cal S}_2 $,
the corresponding polynomials
$\mathbf A(z^{-1})$ and $\mathbf B(z^{-1})$
are coprime.
\end{assm}
\noindent A convolution bound was proven, but nothing was proven about robustness to time-variation and to unmodelled dynamics.
Here we will show that the controller proposed there fits into the
framework of this paper, so that Theorems 1--3 can be applied.
To proceed, we use two parameter estimators---one for ${\cal S}_1 $
and one for ${\cal S}_2 $, and then use a switching adaptive controller to switch between these estimates as necessary;
to prove that the approach works,
all closed-loop poles are placed at the origin.
 
The parameter estimation is projection-algorithm-based and similar to
that of the previous sub-section.
For $i\in\{1,2\} $ and given an estimate
 $\hat\theta_i(t)$ at time $t>t_0$, we have a prediction error of
\begin{equation*}
e_i(t+1):=
y(t+1)-\hat\theta_i(t)^\top\phi(t);
\end{equation*}
estimator updates are computed by
\begin{equation}
\label{est4_a}
\check\theta_i(t+1)
=
\left\{
\begin{matrix*}[l]
\hat\theta_i(t)
+
\frac{\phi(t)}{\|\phi(t)\|^2}
e_i(t+1) & & 
\text{if } \|\phi(t)\|\neq0
\\
\hat\theta_i(t) & & \text{otherwise}
\end{matrix*}\right. 
\end{equation} 
\begin{equation}
\hat\theta_i(t+1)=\text{Proj}_{{\cal S}_i}
\left\{
\check\theta_i(t+1)
\right\}.
\label{est4_b}
\end{equation}
We partition $\hat\theta_i(t)$ as 
$$\hat\theta_i(t)
=:
\begin{bmatrix}
  \hat a_{i,1}(t) & \cdots & \hat a_{i,n}(t) &
   \hat b_{i,1}(t) & \cdots & \hat b_{i,n}(t)
  \end{bmatrix}^\top;
 $$
associated with $\hat\theta_i(t)$ are the polynomials
\[
  \hat{\mathbf A}_i(t,z^{-1})=1-\hat a_{i,1}(t) z^{-1}-
  \hat a_{i,2}(t) z^{-2}\cdots-\hat a_{i,n}(t) z^{-n},\;\; \text{and }
\]
\[
 \hat{\mathbf B}_i(t,z^{-1})=\hat b_{i,1}(t) z^{-1}
 +\hat b_{i,2}(t) z^{-2}\cdots+\hat b_{i,n}(t) z^{-n}.
\]
We design a strictly proper controller 
by choosing its denominator and numerator polynomials, respectively, by
\[
  \hat{\mathbf L}_i(t,z^{-1})=1+\hat l_{i,1}(t) z^{-1}+
  \hat l_{i,2}(t) z^{-2}\cdots l_{i,n}(t) z^{-n},\quad \text{and }
\]
\[
 \hat{\mathbf P}_i(t,z^{-1})= p_{i,1}(t) z^{-1}
 + p_{i,2}(t) z^{-2}\cdots+ p_{i,n}(t) z^{-n}
\]
satisfying
\begin{equation}
\hat{\mathbf A}_i(t,z^{-1})\hat{\mathbf L}_i(t,z^{-1})
+
  \hat{\mathbf B}_i(t,z^{-1})\hat{\mathbf P}_i(t,z^{-1})
  =
  1,
  \label{sylv4}
\end{equation}
i.e. we place the closed-loop poles at zero.

A switching signal $\sigma: \Z\rightarrow \{1,2\} $ is used to choose which parameter estimates to use in the control law at any point in time. 
We update $\sigma(t)$ only every $N\geq2n$ steps; to this end, we define a sequence of switching times as follows: we initialize $\hat t_0:=t_0$ and then define 
$$\hat t_\ell:=t_0+\ell N,\; \ell\in\N.$$ 
The switching signal is given by
\begin{flalign}
\label{sw_const}
\sigma(t)=\sigma(\hat t_\ell),
\qquad t\in[\hat t_\ell,\hat t_{\ell+1}),\;\ell\in\Z^+.
\end{flalign}
Now define the control gains $\hat K_i(i)\in\R^{2 n}$ 
that are also only updated every $N\geq2n$ steps:
\begin{flalign}
\hat K_i(t)
&:=
[-\hat p_{i,1}(\hat t_\ell) \;\; \cdots \;\; -\hat p_{i,n}(\hat t_\ell) \;\;  
 -\hat l_{i,1}(\hat t_\ell) \;\; \cdots \;\; -\hat l_{i,n}(\hat t_\ell)]
 ,
\nonumber
\\
&\qquad
  t\in[\hat t_\ell,\hat t_{\ell+1}),\;\ell\in\Z^+;
 \label{control_para}
\end{flalign}
also define the filtered reference signal
\[
  r_2(t):=
  \sum_{j=1}^n
  \hat p_{\sigma(\hat t_\ell),j}(\hat t_\ell)
  y^*(t-j+1), 
  \;\; t\in[\hat t_\ell,\hat t_{\ell+1}),\;\ell\in\Z^+.
\]
For each $i$, define a performance signal
\begin{equation}
\label{perf1}
J_i(\hat t_\ell):=
\left\{
\begin{matrix*}
0 & \text{if } \phi(j)=0 
\text{ for all } j\in[\hat t_\ell,\hat t_{\ell+1})
\\
\max_{j\in[\hat t_\ell,\hat t_{\ell+1}),\phi(j)\neq0}
\frac{|e_i(j+1|}{\|\phi(j)\|}
& 
\text{otherwise}.
\end{matrix*}
\right.
\end{equation}
With $\sigma(\hat t_0)\in\{1,2\} $, we set 
\begin{equation}
\label{switch4}
\sigma(\hat t_{\ell+1} )=
{\arg\min}_{i\in\{1,2\}}
\,
J_i(\hat t_\ell),
\qquad
\ell\in\Z^+,
\end{equation}
and define the control law by
\begin{flalign}
u(t)
&=
\hat K_{\sigma(t-1)}(t-1)\phi(t-1)
+r_2(t-1).
\label{control4}
\end{flalign}

We observe here that the controller \eqref{est4_a},\eqref{est4_b},
\eqref{sylv4}, \eqref{perf1}, \eqref{switch4} and \eqref{control4} 
fits into the paradigm of Section 2; 
we can rewrite the controller 
in the form of \eqref{control}
as follows.
First we set
$${\cal X}
 =
 \R^{N} \times \R^N
 \times
 {\cal S}_1 \times {\cal S}_2
 \times  \{1,2\}.$$ 
For $t\geq t_0 $, we then set
\begin{flalign*}
z_1(t+1)
&=
\hat K_{\sigma(t)}(t)\phi(t)
+r(t),
\\
z_2(t)
&=
\begin{bmatrix}
z_{21}(t)
\\ 
z_{22}(t)
\\
\hat \theta_1(t)
\\
\hat \theta_2(t)
\\ 
\sigma(t)
\end{bmatrix},
\\
u(t)
&= z_1(t),
\end{flalign*}
with
$r(t)
=
r_2(t);$ 
for $t\geq t_0 $ and $i=1,2 $, we then set
\begin{equation}
z_{2i}(t+1) = 
{\scriptsize
\left[
\begin{matrix}
0 & 1 & & &
\\ 
 &  & 1 & &
\\
& & & \ddots &
\\
& & & & 1
\\
& & & & 0
\end{matrix}
\right]
} 
z_{2i}(t)
+
{\scriptsize
\left[
\begin{matrix}
0 \\ 0 \\ \vdots \\ 0 \\ 1
\end{matrix}
\right]
}
\times
\left\{
\begin{matrix*}[l]
\frac{|e_i(t+1) |}{\|\phi(t) \|} & \phi(t)\neq0
\\
0 & \text{otherwise},
\end{matrix*}
\right.
 \nonumber
\end{equation}
and for $t>t_0 $, we set\footnote{
Here we use $\|z_{2i}(t) \|_\infty $
to denote the $\infty $-norm of the vector $z_{2i}(t) $.
}
\begin{equation}
\sigma(t)=
\left\lbrace
\begin{matrix*}[l]
\sigma(t-1) & \frac{t-t_0}{N} \notin\N
\\ \\
{\arg\min}_{i\in\{1,2\}}
\underbrace{\|z_{2i}(t) \|_\infty }_{\qquad=J_i(t-N) }
& 
\frac{t-t_0}{N} \in\N.
\end{matrix*}
\right.
\nonumber
\end{equation}

\noindent
In \cite{pole18} it is proven\footnote{
Technically speaking, the bound \eqref{conv_v1}
is only proven for $t\geq\tau=t_0 $.
However, since the controller is periodic of period 
$N\geq 2n $,
it follows immediately that the same bound \eqref{conv_v1} holds
for $t\geq \tau\geq t_0$
for all $\tau\in\{t_0+N,t_0+2N, t_0+3N, \ldots \} $.
Since the controller has a bounded gain,
nothing untoward can happen for other $\tau$'s;
it is easy to prove that \eqref{conv_v1} will still hold
for a suitably larger choice of $c$ 
(but with the same $\lambda$).

}
that this adaptive controller
provides a convolution bound for \eqref{pole_plant1}; by Theorems 1--3
we see that the same is true in the presence of time-variation and/or unmodelled dynamics.

\section{Summary and Conclusion}

In this paper
we have shown that for
 a class of
nonlinear
plant and controller combinations,
if a convolution bound
on the closed-loop behavior
 can be proven, then
tolerance to small time-variations in the plant parameters
and a small amount of unmodelled dynamics follows immediately.
We applied the result to prove robustness of our recently designed multi-estimator switching adaptive controllers presented
in \cite{pole18} and \cite{cdc18}.
We expect this to be applicable to other adaptive control
paradigms, such as the adaptive control of nonlinear plants;
this will allow one to focus on the ideal plant in the analysis knowing
that robustness will come for free.
This result also has the potential to be applied in more general nonlinear contexts.

 \bibliographystyle{elsarticle-harv}
\bibliography{technote_scl1}

\end{document}